\newtheorem{theorem}{Theorem}
\newtheorem{lemma}{Lemma}
\newtheorem{proposition}{Proposition}
\newtheorem{corollary}{Corollary}
\theoremstyle{definition}
\newtheorem*{gromov-lem-1-repeat}{Lemma \ref{gromov-lem-1}}
\newtheorem*{to-prove-prop-repeat}{Proposition \ref{to-prove-prop}}
\newtheorem*{rep-prop-repeat}{Proposition \ref{rep-prop}}
\theoremstyle{remark}
\renewcommand{\leq}{\leqslant}
\renewcommand{\geq}{\geqslant}
\newsavebox{\proofbox}
\savebox{\proofbox}{\begin{picture}(7,7)%
  \put(0,0){\framebox(7,7){}}\end{picture}}
\def\E{\mathbb{E}}
\def\N{\mathbb{N}}
\def\F{\mathbb{F}}
\def\Z{\mathbb{Z}}
\def\R{\mathbb{R}}
\def\C{\mathbb{C}}
\def\eps{\varepsilon}
\begin{document}

\onehalfspace

\title[Bounded depth circuits and M\"obius]{On (not) computing the M\"obius function using bounded depth circuits}

\begin{abstract} Any function $F : \{0,\dots,N-1\} \rightarrow \{-1,1\}$ such that $F(x)$ can be computed from the binary digits of $x$ using a bounded depth circuit is orthogonal to the M\"obius function $\mu$ in the sense that $\frac{1}{N} \sum_{0 \leq x \leq N-1} \mu(x)F(x) \rightarrow 0$ as $N \rightarrow \infty$. The proof combines a result of Linial, Mansour and Nisan with techniques of K\'atai and Harman-Kat\'ai, used in their work on finding primes with specified digits. 
\end{abstract}

\author{Ben Green}

\address{Centre for Mathematical Sciences\\
Wilberforce Road\\
Cambridge CB3 0WA\\
England}
\email{b.j.green@dpmms.cam.ac.uk}

\maketitle

\section{Introduction}

This note arose from a question posed by Gil Kalai on Math Overflow \cite{mo-kalai}, and discussed prior to that on the blogs \cite{kalai-blog,lipton-blog}. This question was motivated by a general programme advanced by Peter Sarnak (see, for example \cite{sarnak}), the aim of which is to rigorously establish instances of the M\"obius randomness principle \cite[p. 338]{iwaniec-kowalski}. 

Recall that the M\"obius function $\mu : \N \rightarrow \{-1,0,1\}$ is defined as follows: $\mu(n) = 0$ if $n$ has a nontrivial square factor, $\mu(1) = 1$, and $\mu(p_1\dots p_k) = (-1)^k$ if $p_1,\dots,p_k$ are distinct primes. For the purposes of this paper set $\mu(0) = 0$. The M\"obius randomness principle asserts that $\mu$ is asymptotically orthogonal to any ``low-complexity'' function $F : \N \rightarrow [-1,1]$ in the sense that 
\[ \frac{1}{N} \sum_{x \leq N} \mu(x) F(x) = o(1).\]

Of course, the notion of ``low-complexity'' is not precisely defined. Sarnak has specifically asked about the case in which $F$ arises from a zero-entropy dynamical system; we note that already the most trivial case $F \equiv 1$ is equivalent to the prime number theorem.

Suppose for simplicity that $N = 2^n$. Our aim here is to establish the M\"obius randomness principle for a class of ``low-complexity'' functions defined in a rather different way, using bounded depth circuits on the $n$ binary digits of $x$. By a boolean circuit in this context we mean a directed acyclic graph in which every node is either an input node of in-degree 0 labelled by one of the binary digits of $x$, an AND gate, an OR or a NOT gate. The AND and OR gates are allowed to have arbitrary fan-in (number of inputs). One of these gates is designated as the output gate. Such a circuit naturally computes a function $F : \{0,\dots, N-1\} \rightarrow \{-1,1\}$.

The \emph{size} of a circuit is the total number of gates it contains and its \emph{depth} is the maximal length of a path from an input gate to the output gate. The class $\mbox{AC}^0(d)$ consists of all functions that can be computed using a circuit of depth at most $d$ and size at most  $n^{d}$.

\begin{theorem}\label{mainthm}
Suppose $N = 2^n$. Let $F : \{0,\dots,N-1\} \rightarrow \{-1,1\}$ be an $\mbox{\emph{AC}}^0(d)$ function. Then 
\[ \E_{0 \leq x \leq N-1} \mu(x) F(x) = O(e^{d \log n - cn^{1/6d}}),\] where $c > 0$ is an absolute constant. 
\end{theorem}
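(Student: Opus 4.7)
The plan is to combine two ingredients: the Linial--Mansour--Nisan theorem on Fourier concentration of $\mbox{AC}^0$ functions, and a Daboussi--K\'atai style reduction of M\"obius orthogonality to bilinear digit-correlation estimates of the type studied by K\'atai and Harman--K\'atai. First I would expand $F$ in its Walsh--Fourier basis, $F = \sum_{S \subseteq \{0,\dots,n-1\}} \hat F(S) \chi_S$ with $\chi_S(x) = \prod_{i \in S}(1 - 2x_i)$, where $x_i$ is the $i$th binary digit of $x$. LMN gives $\sum_{|S| > k} \hat F(S)^2 \leq 2n^d \cdot 2^{-c_0 k^{1/d}}$ for every $k$, so Cauchy--Schwarz bounds the contribution of the high-degree part $F_{>k}$ to $\E \mu F$ by $\|F_{>k}\|_2 \leq n^{d/2} 2^{-c_0 k^{1/d}/2}$; for $k$ a small power of $n$, this already has the desired shape. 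It then remains to estimate $\E \mu F_{\leq k}$, for which Cauchy--Schwarz together with $\sum \hat F(S)^2 \leq 1$ gives
\[
|\E \mu F_{\leq k}| \leq n^{k/2} \max_{|S| \leq k}\left|\E_{n < N} \mu(n)\chi_S(n)\right|.
\]

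Next, for each fixed $S$ with $|S| \leq k$, I would bound $\E_{n < N}\mu(n)\chi_S(n)$ via the Daboussi--K\'atai criterion applied to $f = \chi_S$: it suffices to show that, for a reasonable density of pairs of distinct primes $p,q$ in some window,
\[
\left|\E_{n < N/pq}\chi_S(pn)\chi_S(qn)\right|
\]
is small. Because $\chi_S(pn)\chi_S(qn)$ is a function only of the bits of $pn$ and $qn$ at the at most $k$ positions indexed by $S$, this is precisely a joint equidistribution statement for prescribed binary digits of two distinct prime multiples of $n$, which is exactly the setup considered by K\'atai and Harman--K\'atai in their work on primes with specified digits. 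Their techniques yield that the digits of $pn$ at a sparse set of positions are close to uniform in $n$ and that the corresponding digit patterns for $pn$ and $qn$ decorrelate when $p \neq q$.

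Finally, I would choose $k \sim n^{1/6}$ and the prime window appropriately to balance the LMN tail against the per-character saving, so that the factor $n^{k/2} = e^{O(n^{1/6}\log n)}$ is comfortably absorbed by a digit-correlation bound of the form $e^{-c n^{1/6d}}$. The hard part will be extracting this last bound with the correct quantitative strength: the classical K\'atai--Harman arguments are typically run in a qualitative regime, so one must redo the relevant digit equidistribution estimate carefully (for example by expanding $\chi_S$ via additive characters on $\Z/2^n\Z$ and then estimating exponential sums in $n$ of shape $\sum e(a(pn-qn)/2^n)$) in order to obtain uniformity in $|S|\leq k$ and in $p,q$, with the specific exponent $1/6d$ required by the statement.
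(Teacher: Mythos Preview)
Your high-level decomposition via Linial--Mansour--Nisan and the split at $k \sim n^{1/6}$ matches the paper. The genuine gap is in how you propose to bound $\hat\mu(S) = \E_{x<N}\mu(x)\chi_S(x)$ for $|S|\leq k$. You invoke the Daboussi--K\'atai bilinear criterion, but that criterion has an intrinsic quantitative ceiling: in any of its forms the conclusion contains a term of size roughly $N\big(\sum_{p\in\mathcal P}1/p\big)^{-1/2}$, and since $\sum_{p\leq N}1/p\ll\log\log N$ this can never save more than a factor of $(\log\log N)^{1/2}$, regardless of how small the off-diagonal correlations $\E_m\chi_S(pm)\overline{\chi_S(qm)}$ are. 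That is nowhere near enough to absorb your loss $n^{k/2}=e^{\Omega(n^{1/6}\log n)}$; in fact even a bound of the shape $e^{-cn^{1/(6d)}}$ would not absorb it for $d\geq 2$, so the balancing in your last paragraph is also off. The bound one actually needs on $\hat\mu(S)$ is of strength $e^{-c n^{1/3}}$ (the final $n^{1/(6d)}$ in the theorem comes entirely from the LMN tail, not from the low-degree part).

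The paper obtains this strong bound by a different ``K\'atai'' argument, not the bilinear criterion. One writes $\chi_S(x)=\prod_{i\in S}\psi(x/2^i)$, replaces $\psi$ by a short trigonometric polynomial, and thereby converts $\hat\mu(S)$ into an ordinary exponential sum $\hat\mu(\theta)=\E_x\mu(x)e(\theta x)$ at a \emph{sparse dyadic} frequency $\theta=\sum_{j\leq k} r_j/2^{i_j}$ with small numerators. The key diophantine observation of Harman and K\'atai is that such a $\theta$ is either close to a rational $a/2^t$ with $2^t$ small, or not close to any rational with small denominator at all. In the first case one wins $e^{-c\sqrt{\log N}}$ from the classical zero-free region for $L(s,\chi)$ with $\chi$ of conductor a power of two (there are no Siegel zeros here, since the only real primitive characters of $2$-power conductor are $\chi_4,\chi_8,\chi_4\chi_8$); in the second case one wins a power of $N$ via Vaughan's identity (Type I/II sums). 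Either way one gets $\hat\mu(\theta)\ll e^{-c\sqrt{\log N}}$, hence $\hat\mu(S)\ll k e^{-c n^{1/2}/k}$, which for $k\leq n^{1/6}$ gives $e^{-c n^{1/3}}$ and comfortably beats $n^{k/2}$. Your proposal to ``expand $\chi_S$ via additive characters'' is the right first move, but it should feed into this direct estimate for $\hat\mu(\theta)$, not into a Daboussi--K\'atai bilinear reduction.
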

\emph{Remarks.} Here, and throughout the paper, we write $\E_{x \in X} \psi(x)$ for the average of a function $\psi$ over some finite set $X$. Note that the bound is $o(1)$ for $d \leq c'\log n/\log\log n$, and hence in particular for any fixed $d$.\vspace{11pt}

Using a result of Linial, Mansour and Nisan \cite{linial-mansour-nisan}, to be recalled in detail in the next section, the problem can be reduced to that of estimating certain of what Kalai calls the \emph{Fourier-Walsh coefficients} of $\mu$. \vspace{11pt}

\emph{Definition.}
Suppose that $N = 2^n$, and for $x \in \{0,\dots,N-1\}$ write $x = x_1 + 2 x_2 + \dots + 2^{n-1} x_{n}$ in binary. Let $f : \{0,\dots,N-1\} \rightarrow \C$ be a function. Then we define the Fourier-Walsh coefficients $\hat{f}(S)$, $S \subseteq \{1,\dots, n\}$, by 
\[ \hat{f}(S) := \E_{0 \leq x \leq N-1} f(x) (-1)^{\sum_{i \in S} x_i}.\]

We remark that these are nothing more than the Fourier coefficients of $f$, considered as a function on the binary cube $\F_2^n$ instead of as a function on $\{0,\dots,N-1\}$.

The new content of this note is the following bound on Fourier-Walsh coefficients, which (given  the result of Linial, Mansour and Nisan) rather easily implies Theorem \ref{mainthm}.

\begin{proposition}\label{fourier-walsh-bound}
Suppose that $S \subseteq \{1,\dots,n\}$ has size $|S| = k$. Then $\hat{\mu}(S) = O(k e^{-cn^{1/2}/k})$, where $c > 0$ is some absolute constant.\end{proposition}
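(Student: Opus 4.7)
The plan is to Fourier-expand the Walsh character $\chi_S$ into ordinary exponentials $e(x\alpha)$ and then estimate each resulting exponential sum $\E_{x<N}\mu(x)e(x\alpha)$ using the prime number theorem in arithmetic progressions. Write $(-1)^{x_i} = s(x/2^i)$, where $s : \R/\Z \to \{\pm 1\}$ is the square wave equal to $+1$ on $[0,1/2)$ and $-1$ on $[1/2,1)$. Its Fourier series is $s(t) = \sum_{l \textup{ odd}} (2/\pi i l)\, e(lt)$, and via a Fej\'er or Beurling--Selberg approximation one obtains a trigonometric polynomial $s^{(L)}(t) = \sum_{|l| \leq L,\, l \textup{ odd}} c_l\, e(lt)$ of degree $L$ with $\|s^{(L)}\|_\infty = O(1)$, $\sum_l |c_l| = O(\log L)$, and $\|s - s^{(L)}\|_{L^1(\T)} = O(1/L)$.

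A telescoping argument over the $k = |S|$ factors gives $\E_{x<N}\bigl|\chi_S(x) - \prod_{i \in S} s^{(L)}(x/2^i)\bigr| = O(k/L)$. Expanding the product,
\[ \prod_{i \in S} s^{(L)}(x/2^i) = \sum_{(l_i)} \Bigl(\prod_i c_{l_i}\Bigr) e\Bigl(x \sum_i l_i/2^i\Bigr), \]
a linear combination of exponentials $e(x\alpha)$ in which $\alpha = \sum_{i \in S} l_i/2^i$ is a rational whose denominator divides $2^{i_{\max}(S)}$, and whose total $\ell^1$ coefficient mass is at most $(\sum_l|c_l|)^k = O(\log L)^k$. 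For each such $\alpha = a/q$, I would bound $\E_{x<N} \mu(x)\, e(x a/q)$ via the prime number theorem in arithmetic progressions: the classical zero-free region of Dirichlet $L$-functions gives $\sum_{x<N,\, x \equiv a \md{q}} \mu(x) \ll (N/q)\exp(-c\sqrt{\log N})$ uniformly for $q \leq \exp(c'\sqrt{\log N})$, so $|\E_{x<N}\mu(x) e(x\alpha)| = O(\exp(-c\sqrt{n}))$ in that range.

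Combining gives $|\hat{\mu}(S)| \ll (\log L)^k \exp(-c\sqrt n) + k/L$, and choosing $L = \exp(c\sqrt{n}/(k+1))$ balances the two terms and yields the claimed bound $O(k\exp(-c' n^{1/2}/k))$. The principal obstacle is handling the case that $i_{\max}(S)$ is comparable to $n$: then the denominator $q = 2^{i_{\max}(S)}$ of $\alpha$ may exceed $\exp(c'\sqrt{\log N})$, so PNT in APs no longer provides exponential savings for every frequency produced by the expansion. Here I expect to invoke the K\'atai and Harman--K\'atai techniques alluded to in the introduction: either partition $[0, N)$ into dyadic blocks of length $2^{i_{\max}(S)}$ on which the top-bit factor of $\chi_S$ is a constant sign, reducing (by Abel summation applied to $\sum_{x \leq y}\mu(x)\chi_{S \setminus \{i_{\max}\}}(x)$) to the case of a smaller index set, and iterate; or apply K\'atai's orthogonality criterion to replace $\hat{\mu}(S)$ by a bilinear sum $\sum_{p \neq q \textup{ prime},\, \leq P} \sum_n \chi_S(pn)\overline{\chi_S(qn)}$ and estimate this by digit analysis of the products $pn$, $qn$. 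Keeping the polynomial $k$-dependence intact through this high-bit reduction will be the delicate technical point.
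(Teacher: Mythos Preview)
Your reduction to exponential sums at sparse dyadic frequencies is correct and is essentially the paper's Proposition~\ref{lems}: after smoothing each square-wave factor, one is left with bounding $\hat{\mu}(\theta)$ for $\theta=\sum_{j=1}^{k} r_j/2^{i_j}$ with $|r_j|$ at most roughly $L$. The gap is in your treatment of the ``high-bit'' case. Neither of your proposed escapes is what the paper does, and neither clearly works. Iterated Abel summation stripping the top bit gives a recursion $E_k(N)\leq (1+O(2^{n-i_k}))E_{k-1}(N)$, and the accumulated product $\prod_j(1+O(2^{n-i_j}))$ is uncontrolled as soon as some $i_j$ is far below $n$; the K\'atai--Daboussi criterion would require nontrivial cancellation in $\sum_m\chi_S(pm)\overline{\chi_S(qm)}$, which involves binary carry propagation and is not addressed.

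The missing idea is a two-step diophantine argument. First, a Vinogradov/Vaughan Type I/II bound (Proposition~\ref{vaughan}) shows unconditionally that if $|\hat{\mu}(\theta)|\geq\delta$ then $\theta$ lies within $(\log N/\delta)^{O(1)}/N$ of some $a/q$ with $q\leq(\log N/\delta)^{O(1)}$ --- with no hypothesis that $q$ be a power of two. Second, and this is the Harman--K\'atai point you are alluding to but have not used correctly: by pigeonhole the indices $i_1<\cdots<i_k\leq n$ contain a gap $i_{j+1}-i_j\geq n/2k$, so $\theta$ is also within $O(L\cdot 2^{-n/2k})/2^{i_j}$ of a rational with denominator $2^{i_j}$. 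If $L$ and $q$ are both at most $e^{c\sqrt{\log N}}$ and $k<n^{1/2}$, these two approximations are close enough to force $a/q=a'/2^{i_j}$, i.e.\ $q$ \emph{is} a power of two, after which your PNT-in-APs bound for moduli $2^t\leq e^{c\sqrt{\log N}}$ applies (to $2^{i_j}$, not to $2^{i_{\max}}$). In short: you should not try to bound $\hat{\mu}(\theta)$ directly for every sparse dyadic $\theta$ using only progressions to power-of-two moduli; rather, use bilinear sums to put $\theta$ in a major arc, and then use the sparse dyadic structure to force that major arc to be a power-of-two one.
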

This bound is nontrivial for $k = O(n^{1/2}/\log n)$, and gives a bound of shape $e^{-\log^{\Omega(1)} N}$ as soon as $k < n^{1/2 - \eta}$. 

It seems to be an interesting question in its own right to ask for a proof that $\hat{\mu}(S) = o(1)$ for all sets $S$. It seems that the methods of this paper cannot hope to handle sets $S$ with $|S|$ bigger than about $n^{1/2}$ unconditionally, although assuming the Generalised Riemann Hypothesis (GRH) we obtain nontrivial bounds for $|S| = O(n/\log n)$. 
Interestingly, the most extreme case $S = \{1,\dots,n\}$ follows from the techniques of Mauduit and Rivat \cite{mauduit-rivat}. It is possible that by combining those ideas with the techniques in this paper one could indeed obtain a bound for all $S$, at least on GRH, but this is by no means a trivial matter. We hope to return to it in a future paper\footnote{Added in proof: Jean Bourgain has since obtained such a bound.}.

In the author's opinion it is very slightly more natural to consider, in place of the M\"obius function $\mu$, the Liouville function $\lambda$. This function is the unique completely multiplicative function such that $\lambda(p) = -1$ for all primes $p$, or equivalently $\lambda(x)$ is $\pm 1$ according to the parity of the number of prime factors of $x$, counted with multiplicity. All of the results in this paper hold equally well for the Liouville function, with very similar proofs.

Finally let it be remarked that there is a considerable literature concerning the difficulty of computing number theoretic functions. Of particular relevance to this paper is the work \cite{ass}, which shows that detecting primality does not lie in $\mbox{AC}^0$, and \cite{bds}, which applies the result of Linial, Mansour and Nisan to show that squarefreeness cannot be detected using bounded depth circuits.

\emph{Acknowledgements.} It is a pleasure to thank Gil Kalai for bringing this question to my attention and for encouragement, and for drawing attention to the work of Linial, Mansour and Nisan. It is also my pleasure to thank Andrew Granville for several helpful conversations concerning the distribution of primes in progressions. Finally, I thank the anonymous referee and the editors of CPC for a careful reading of the paper.

\section{A result of Linial, Mansour and Nisan}

The following result from \cite{linial-mansour-nisan} provides a link between functions in $\mbox{AC}^0$, that is to say computed by bounded depth circuits of polynomial size, and Fourier-Walsh coefficients. 

\begin{theorem}[Linial, Mansour and Nisan]
Suppose that $F : \{0,\dots, N-1\} \rightarrow \{-1,1\}$ is computed using a circuit of depth $d$ and size $M$. Then 
\[ \sum_{|S| > t}|\hat{F}(S) |^2 \leq 2 M 2^{-t^{1/d}/20}.\]
\end{theorem}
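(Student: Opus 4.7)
The plan is to follow the random-restriction strategy pioneered in \cite{linial-mansour-nisan}, whose engine is Håstad's switching lemma: after a random restriction that keeps each variable independently alive with probability $p$, a CNF of bottom fan-in $w$ becomes equivalent to a DNF of bottom fan-in $s$ (or vice versa) except with probability at most $(Cpw)^s$, for some absolute constant $C$.

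First I would normalise $F$ into an alternating-AND/OR circuit of depth $d$ and size at most $2M$ by pushing all NOT gates down to the inputs via De Morgan. Then set $s = t^{1/d}/20$, pick $p$ of order $1/s$, and apply a sequence of $d-1$ independent random restrictions $\rho_1,\dots,\rho_{d-1}$, each with survival probability $p$. Iterating the switching lemma along the depth, after all $d-1$ restrictions the surviving circuit---except on an event of total probability at most $(d-1)M(Cps)^s \leq M\cdot 2^{-s}$---collapses to a function representable by a decision tree of depth at most $s$. A standard calculation shows depth-$s$ decision trees have all Fourier mass supported on $|T|\leq s$, giving
\[ \E_\rho \sum_{|T|>s} |\widehat{F|_\rho}(T)|^2 \;\leq\; M\cdot 2^{-s},\]
since on the failure event $F|_\rho$ contributes at most $\|F|_\rho\|_2^2 = 1$ to the sum.

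To transfer this bound to $F$ itself, I would expand using the standard restriction identity: writing $J_\rho$ for the set of surviving variables,
\[ \E_\rho \sum_{|T|>s}|\widehat{F|_\rho}(T)|^2 \;=\; \sum_{S} |\hat F(S)|^2 \, \Pr\!\big[|S \cap J_\rho| > s\big]. \]
For any $S$ with $|S|>t$, the quantity $|S\cap J_\rho|$ is $\operatorname{Bin}(|S|, p^{d-1})$ distributed; the parameters above are chosen so that $p^{d-1}t \geq 2s$, whereupon Chernoff gives $\Pr[|S\cap J_\rho|>s] \geq 1/2$. Combining, $\sum_{|S|>t}|\hat F(S)|^2 \leq 2M\cdot 2^{-s} = 2M\cdot 2^{-t^{1/d}/20}$, matching the stated bound (any lingering constants are absorbed by a slight shrinking of $s$).

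The main obstacle is the switching lemma itself, a genuinely nontrivial combinatorial theorem originating in Håstad's work; modulo that input the argument is essentially parameter bookkeeping, with the three constraints $Cps \leq 1/2$, $p^{d-1}t \geq 2s$, and $s \sim t^{1/d}$ reconciled by the single choice $s \sim t^{1/d}$, $p \sim 1/s$. The only place some care is required is ensuring that the cumulative failure probability across all $d-1$ switching steps is still dominated by $M\cdot 2^{-s}$, which is why one must take $s$ somewhat smaller than the naïve $t^{1/d}$ and absorb constants into the exponent.
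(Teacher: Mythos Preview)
The paper does not prove this theorem at all: it is quoted as a black box from \cite{linial-mansour-nisan} and then applied. Your sketch is a faithful outline of the original Linial--Mansour--Nisan argument (random restrictions plus H\aa stad's switching lemma, followed by the restriction--Fourier identity $\E_\rho \sum_{|T|>s}|\widehat{F|_\rho}(T)|^2 = \sum_S |\hat F(S)|^2 \Pr[|S\cap J_\rho|>s]$), and the parameter balancing you describe is exactly the standard one; so your proposal is correct and in fact supplies more than the paper does.
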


Using this, and assuming Proposition \ref{fourier-walsh-bound}, it is not a difficult matter to establish Theorem \ref{mainthm}. We do this now. Let $F : \{0,\dots,N-1\} \rightarrow \{-1,1\}$ be an $\mbox{AC}^0(d)$ function; thus $F$ is computed by a binary circuit of depth at most $d$ and size at most $n^d$. Let $\gamma > 0$ be a small real number.
By Parseval's identity on the cube $\F_2^n$ we have\[ \E_{0 \leq x \leq N-1} \mu(x) F(x)  = \sum_{S} \hat{\mu}(S)\hat{F}(S) .\] Using the trivial bound $|\hat{F}(S)| \leq 1$ for $|S| \leq n^{1/6}$, this is bounded in modulus by
\[ n^{n^{1/6}}\sup_{|S| \leq n^{1/6}}| \hat{\mu}(S)| + \sum_{|S| > n^{1/6}} |\hat{\mu}(S)||\hat{F}(S)|.\]
Applying Proposition \ref{fourier-walsh-bound} to the first term and the Cauchy-Schwarz inequality and Parseval's identity to the second, it follows that this is at most\[  O(e^{-c n^{1/3}})+ \big(\sum_{|S| > n^{1/6}}|\hat{F}(S)|^2\big)^{1/2} .\]
Finally, by the theorem of Linial, Mansour and Nisan, this is no more than
\[  O(e^{-cn^{1/3}})  +  (2 n^d e^{-cn^{1/6d}} )^{1/2} = O(e^{d \log n -  cn^{1/6d}}),\] as required.

\section{An argument of K\'atai}

In this section we give a general version of an argument from \cite{katai} (see also \cite{harman-katai}). In the language of this paper, it gives a link between the Fourier-Walsh coefficients $\hat{f}(S)$ and the ``traditional'' Fourier coefficients
\[ \hat{f}(\theta) := \E_{0 \leq x \leq N-1} f(x) e(\theta x),\] where $\theta \in [0,1]$ and here, as always, $e(u) := e^{2\pi i u}$.  By a \emph{dyadic} rational we mean a rational number with denominator a power of two.

\begin{proposition}\label{lems}
Let $f : \{0,\dots,N-1\} \rightarrow [-1,1]$ be a function for which there exists some $S \subseteq \{1,\dots,n\}$, $|S| = k$, for which the Fourier-Walsh coefficient $\hat{f}(S)$ has magnitude at least $\delta$, $0 < \delta < 1/2$. Then there is a dyadic rational $\theta$ such that the traditional Fourier coefficient $\hat{f}(\theta)$ has magnitude at least $(\delta/10 k)^{4k}$. In fact, $\theta$ can be taken to be a \emph{sparse} dyadic rational in the sense that it may be written as $\theta = \frac{r_1}{2^{i_1}} + \dots + \frac{r_k}{2^{i_k}}$ with $r_i \in \Z$ and $|r_i| \leq (10k/\delta)^3$ for all $i$. 
\end{proposition}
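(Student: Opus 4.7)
The plan is to Fourier-expand the Walsh character $\chi_S(x) := (-1)^{\sum_{i \in S} x_i} = \prod_{i \in S}(-1)^{x_i}$ as a sum of ordinary additive characters $e(\theta x)$ and then extract a single large term. Each factor $(-1)^{x_i}$ is periodic of period $2^i$, and its Fourier expansion on $\Z/2^i\Z$ takes the form $\sum_m c_{i,m}e(mx/2^i)$, with $c_{i,m}=0$ for $m$ even and $|c_{i,m}|\leq 1/|m|$ for $m$ odd (a direct computation with a geometric series). Writing $S=\{i_1<\dots<i_k\}$ and multiplying out gives
\[\hat f(S) = \sum_{\vec m} \Bigl(\prod_{j=1}^{k} c_{i_j,m_j}\Bigr)\hat f(\theta_{\vec m}), \qquad \theta_{\vec m} := \sum_{j=1}^k \frac{m_j}{2^{i_j}};\]
hence it is enough to exhibit some $\vec m$ with all $|m_j|\leq M:=(10k/\delta)^3$ for which $|\hat f(\theta_{\vec m})|$ is large, since such a $\theta_{\vec m}$ is automatically a sparse dyadic rational of the required shape.

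So I would set $B=\{\vec m:|m_j|\leq M\text{ for all }j\}$, split the expansion into a main term over $B$ and a tail over $B^c$, and aim to bound the tail by $\delta/2$ and then pigeonhole. The difficulty is that a na\"ive $\ell^1$ bound using $\|\hat f\|_\infty \leq 1$ fails, since $\sum_m|c_{i,m}|\sim \log 2^i$ grows with $i$. The crucial observation is that, for any fixed $j$, summing over the other indices \emph{reassembles a single traditional Fourier coefficient}:
\[\sum_{(m_i)_{i \neq j}} \Bigl(\prod_{i \neq j} c_{i,m_i}\Bigr)\hat f(\theta_{\vec m}) = \E_x\, f(x)\chi_{S \setminus \{i_j\}}(x)\,e(m_j x/2^{i_j}) =: G_j(m_j).\]
Since $|f\chi_{S \setminus\{i_j\}}|\leq 1$, Parseval on $\Z/N\Z$ gives $\sum_{m_j}|G_j(m_j)|^2\leq 1$, so Cauchy--Schwarz yields
\[\Bigl|\sum_{|m_j|>M} c_{j,m_j}G_j(m_j)\Bigr| \leq \Bigl(\sum_{|m|>M}|c_{j,m}|^2\Bigr)^{1/2} = O(M^{-1/2}),\]
and a union bound over the $k$ values of $j$ puts the full tail at $O(k/\sqrt{M})$, which is comfortably $\leq \delta/2$ with our choice of $M$.

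The remainder is routine. From $\delta/2\leq \sum_{\vec m\in B}|\prod_j c_{i_j,m_j}|\cdot|\hat f(\theta_{\vec m})| \leq (\max_{\vec m\in B}|\hat f(\theta_{\vec m})|)\prod_j \sum_{|m|\leq M}|c_{i_j,m}|$, together with $\sum_{|m|\leq M}|c_{i_j,m}|=O(\log M)=O(\log(10k/\delta))$, some $\vec m\in B$ must satisfy $|\hat f(\theta_{\vec m})|\geq \delta/(C\log(10k/\delta))^k$, which beats the stated $(\delta/10k)^{4k}$ with room to spare whenever $\delta<1/2$. The one genuinely non-trivial step, and the main obstacle in this argument, is the tail estimate: one has to resist using $|\hat f|\leq 1$ term by term and instead notice the reorganization that turns $\prod_{i\neq j} c_{i,m_i}\,\hat f(\theta_{\vec m})$ back into a single Fourier coefficient of the bounded function $f\chi_{S\setminus\{i_j\}}$, so that Parseval can absorb the sum over the remaining $m_i$.
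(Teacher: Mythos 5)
Your expansion of $(-1)^{x_i}$ into additive characters with $|c_{i,m}|\leq 1/|m|$, the reassembly identity defining $G_j(m_j)$, and the Bessel--Cauchy--Schwarz estimate for $\sum_{|m_j|>M}c_{i_j,m_j}G_j(m_j)$ are all correct; the gap is the ``union bound'' that converts these per-$j$ estimates into a bound for the tail over $B^c$. The identity for $G_j(m_j)$ requires the other coordinates $(m_i)_{i\neq j}$ to run over their \emph{full} ranges, but $B^c$ is not the disjoint union of the sets $\{|m_j|>M,\ \text{others unrestricted}\}$: a vector with several coordinates exceeding $M$ is counted once in the tail and several times in your sum over $j$, and since the terms are signed (indeed your whole point is that they must be resummed rather than bounded termwise) this cannot be repaired by a triangle inequality. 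Any exact decomposition, say by the first index $j$ with $|m_j|>M$, restricts the coordinates $i<j$ to $|m_i|\leq M$, so the inner sum reassembles not $f\chi_{S\setminus\{i_j\}}$ but $f$ multiplied by sharply truncated partial sums $\sum_{|m|\leq M}c_{i,m}e(mx/2^{i})$ of the square wave. Those partial sums are bounded only by $1+c_0$ for an absolute constant $c_0>0$ (Gibbs phenomenon), so the repaired tail bound is $O\bigl(k(1+c_0)^{k}M^{-1/2}\bigr)$, which with $M=(10k/\delta)^{3}$ is not $\leq\delta/2$ once $k$ is large and $\delta$ is of constant size --- and the proposition is asserted for all $0<\delta<1/2$ and all $k$. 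Enlarging $M$ to absorb $(1+c_0)^{2k}$ yields a true statement but with a sparseness bound exponentially worse than the claimed $|r_i|\leq(10k/\delta)^{3}$; an inclusion--exclusion version meets the same exponential-in-$k$ loss through the $2^{k}$ subsets of coordinates.

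This is precisely the difficulty the paper's proof is built to avoid: rather than expanding each square wave fully and truncating sharply in the frequency variable, it first replaces $\psi$ by a smoothed $\tilde\psi$ which still takes values in $[-1,1]$, satisfies $\E_{0 \leq x \leq N-1}|\psi(x/2^{i})-\tilde\psi(x/2^{i})|\leq\eps$, and has a Fourier expansion supported on $|r|\leq 100\eps^{-3}$ with coefficients of modulus at most $1$. Because every factor stays bounded by $1$ at every stage, replacing the $k$ copies of $\psi$ one at a time costs only $k\eps=\delta/2$ \emph{additively}, and expanding the product of the $\tilde\psi$'s and pigeonholing then gives both the magnitude $(\delta/10k)^{4k}$ and the sparseness $|r_i|\leq(10k/\delta)^{3}$ as stated. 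To rescue your cleaner-looking expansion you would need to smooth the cutoff in the $m$-variable in the same spirit, so that the truncated factors remain bounded by $1$; at that point the argument essentially becomes the paper's.
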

\begin{proof} We have
\begin{equation}\label{eq-11} \hat{f}(S) = \E_{0 \leq x \leq N-1} f(x)\prod_{i \in S} \psi(\frac{x}{2^i}),\end{equation}
where $\psi : \R/\Z \rightarrow [-1,1]$ is the function defined by $\psi(t) = 1$ if $0 \leq t < \frac{1}{2}$ and $\psi(t) = -1$ if $\frac{1}{2} \leq t < 1$.

We shall replace $\psi$ by a smoothed version $\tilde\psi : \R/\Z \rightarrow [-1 ,1]$ with the property that 
\begin{equation}\label{needed} \E_{0 \leq x \leq N-1} |\psi(\frac{x}{2^i}) - \tilde \psi(\frac{x}{2^i}) | \leq \eps\end{equation}for $i = 0,1,2,\dots,n$, and which has a Fourier expansion of the shape

\begin{equation}\label{shape} \tilde\psi(t) = \sum_{|r| \leq 100\eps^{-3}} a_r e(rt)\end{equation} with $|a_r| \leq 1$ for all $r$. 

Such a function may be constructed by a fairly standard type of smoothing procedure, which we now describe. Consider first of all the function 
$\psi_0 := \phi \ast \chi \ast \chi$, where $\phi : \R/\Z \rightarrow [-1,1]$ is defined by $\phi(t) = \psi(t + \frac{\eps}{24})$, that is to say $\phi(t) = 1$ if $-\frac{\eps}{24} \leq t <  \frac{1}{2} - \frac{\eps}{24}$ and $\phi(t) = -1$ if $\frac{1}{2} - \frac{\eps}{24} \leq t <  1 - \frac{\eps}{24}$, and $\chi := \frac{24}{\eps} 1_{[-\eps/48,\eps/48]}$. Note that $\psi_0$ takes values in $[-1,1]$ and that $\psi_0(t) = \psi(t)$ outside of the pairs of intervals $I_1 = [\frac{1}{2} - \frac{\eps}{12}, \frac{1}{2}]$ and $I_2 = [1 - \frac{\eps}{12}, 1]$.

The interval $I_1$, being situated adjacent to but not including the point $\frac{1}{2}$, contains at most its fair share (that is, $\eps/12$) of rationals with denominator $2^i$ for any $i$. The same is true for $I_2$. It follows easily that 
\begin{equation}\label{bzz} \E_{0 \leq x \leq N-1} |\psi(\frac{x}{2^i}) - \psi_0(\frac{x}{2^i}) | < \eps/3.\end{equation}

Now the Fourier coefficients of $\psi_0$ are given by $\hat{\psi}_0(r) = \hat{\phi}(r)\hat{\chi}(r)^2$, and in particular (computing $\hat{\chi}$ explicitly) we have the bound $|\hat{\psi}_0(r)| \leq |\hat{\chi}(r)|^2 \leq \min(1, 24/\eps\pi|r|)^2$. It follows that
\[ \sum_{|r| > 100/\eps^3} |\hat{\psi}_0(r)| < \frac{\eps}{3}.\]
Define $\psi_1(t) := \sum_{|r| \leq 100/\eps^3} \hat{\psi}_0(r) e(rt)$; This clearly has the shape \eqref{shape}. Furthermore from the preceding analysis we have the bound
\begin{equation}\label{tri-1}
 \Vert \psi_1 - \psi_0 \Vert_{\infty}  \leq \sum_{|r| > 100/\eps^3} |\hat{\psi}_0(r)|   < \eps/3.
\end{equation}
This also implies that $\Vert \psi_1\Vert_{\infty} \leq \Vert\psi_0 \Vert_{\infty} + \frac{\eps}{3} \leq 1 + \frac{\eps}{3}$. Finally set $\tilde\psi := \psi_1/(1 + \frac{\eps}{3})$, so that $\tilde\psi$ takes values in $[-1,1]$. Then 
\begin{equation}\label{tri-2} \Vert \tilde\psi - \psi_1 \Vert_{\infty} = \Vert \psi_1 \Vert_{\infty} \big(1 - \frac{1}{1 + \eps/3} \big)\leq \eps/3.\end{equation}

This function $\tilde\psi$ is still of the form \eqref{shape}. Combining \eqref{bzz}, \eqref{tri-1} and \eqref{tri-2} using the triangle inequality gives the desired property \eqref{needed}.

Choose $\eps := \delta/2k$ in this construction. Replacing each copy of $\psi$ by $\tilde\psi$ in turn in \eqref{eq-11} and applying the property \eqref{needed} repeatedly, we obtain

\[ \big|\E_{0 \leq x \leq N-1} f(x) \prod_{i \in S} \tilde\psi (\frac{x}{2^i})\big| \geq \delta - k\eps \geq \delta/2.\] 

Now develop each $\tilde\psi$ in its Fourier series to get

\[ \big| \sum_{|r_1|,\dots,|r_k| \leq 100/\eps^3} a_{r_1} a_{r_2} \dots a_{r_k} \E_{0 \leq x \leq N-1} f(x) e\big( (\frac{r_1}{2^{i_1}} + \dots + \frac{r_k}{2^{i_k}} ) x)\big|  \geq \delta/2,\]
where $S = \{i_1,\dots, i_k\}$.  

It follows immediately that there is some $\theta = \frac{r_1}{2^{i_1}} + \dots + \frac{r_k}{2^{i_k}}$ such that 
\[ |\hat{f}(\theta)| \geq \frac{\delta}{2} (\frac{\eps^3}{300})^k >   (\frac{\delta}{10k})^{4k},\] which is the claimed bound.

The fact that $\theta$ is a \emph{sparse} dyadic rational, in the sense claimed, is clear from the proof.
\end{proof}

On the assumption of the GRH, Proposition \ref{lems} already implies Theorem \ref{mainthm}. Indeed on that hypothesis it is known, by work of Baker and Harman \cite{baker-harman}, that $|\hat{\mu}(\theta)| \ll_{\eps} N^{-1/4 + \eps}$ for all $\eps > 0$. We note that this together with Proposition \ref{lems} gives a nontrivial estimate $\hat{\mu}(S) = o(1)$ for all $S \subseteq \{1,\dots,n\}$ with $|S| \leq cn/\log n$. 

Without some unproved hypothesis, however, the best available bounds for general $\theta$ are $|\hat{\mu}(\theta)| \ll_A \log^{-A} N$ for all $A > 0$, the implied constant being ineffective as a function of $A$. This 1937 result of Davenport \cite{davenport-1937}, whilst it leads to a nontrivial bound on $\hat{\mu}(S)$ for $|S| \leq A\log n/\log \log n$ for all $A$, does not suffice for our main application.

To proceed further, some special use must be made of the sparse dyadic structure of $\theta$. In this regard we will prove, in the next section, the following result. 

\begin{proposition}\label{to-prove-prop}
There is some $c_1 > 0$ with the following property. Let $N = 2^n$, suppose that $k < n^{1/2}$, and suppose that $\theta = \frac{r_1}{2^{i_1}} + \dots + \frac{r_k}{2^{i_k}}$ with  $|r_i| \leq e^{c_1\sqrt{\log N}}$ for all $i$. Then $|\hat{\mu}(\theta)| = O( e^{-c_1\sqrt{\log N}})$.
\end{proposition}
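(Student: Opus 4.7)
The plan is to bound $\sum_{x<N}\mu(x)e(\theta x)$ via the classical Vaughan--Vinogradov estimate, after exhibiting a Dirichlet approximation to $\theta$ with denominator in a favourable range. Recall the estimate: for $\theta=a/q+\beta$, $(a,q)=1$, $|\beta|\leq 1/q^{2}$, $1\leq q\leq N$,
\[ \Bigl|\sum_{n\leq N}\mu(n)e(\theta n)\Bigr|\ll N(\log N)^{4}\bigl(q^{-1/2}+N^{-1/10}+(q/N)^{1/2}\bigr), \]
which is $O(Ne^{-c\sqrt{\log N}})$ precisely when $q$ lies in the \emph{good range} $[e^{C\sqrt{\log N}},Ne^{-C\sqrt{\log N}}]$ for $C$ suitably large.

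As a preliminary normalization, by absorbing the maximal power of $2$ in each $r_j$ into the denominator (which shifts each $i_j$ by at most $\log_{2}R=O(\sqrt{\log N})$) and then combining equal indices, I may assume each $r_j$ is odd and $i_1<\cdots<i_k$. Then $\theta=P/Q$ with $Q:=2^{i_k}$ and $P$ odd---the numerator $\sum_j r_j 2^{i_k-i_j}$ is odd, since only the $j=k$ term contributes an odd summand---so $\theta$ is already in reduced form.

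Set $M:=i_k$ and split into cases. If $Q$ lies in the good range, apply the Vaughan--Vinogradov bound with $q=Q$, $\beta=0$. If $M<C\sqrt{\log N}/\log 2$, so $Q\leq e^{C\sqrt{\log N}}$ is small, expand
\[ \hat{\mu}(\theta)=N^{-1}\sum_{a\,(\mathrm{mod}\,Q)}e(Pa/Q)\sum_{\substack{x<N\\x\equiv a\,(\mathrm{mod}\,Q)}}\mu(x), \]
and bound each inner sum by PNT in arithmetic progressions; here $Q$ is a power of $2$, so the real primitive characters mod $Q$ (namely $\chi_{-4},\chi_{8},\chi_{-8}$) have no Siegel zero, and we get an effective bound $O((N/Q)e^{-c\sqrt{\log N}})$ that aggregates to the desired estimate. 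If $M>n-C\sqrt{\log N}/\log 2$ (so $Q$ exceeds the good range), use a partial-sum approximation: let $l^{*}$ be the largest index with $i_{l^{*}}\leq n-C\sqrt{\log N}/\log 2$ and set $\theta_{l^{*}}:=\sum_{j\leq l^{*}}r_{j}/2^{i_{j}}$, of reduced denominator $q':=2^{i_{l^{*}}}$ and error $|\beta'|\leq 2R/2^{i_{l^{*}+1}}\leq e^{C'\sqrt{\log N}}/N$. Depending on the position of $i_{l^{*}}$, either apply Vaughan--Vinogradov directly with $q=q',\beta=\beta'$ (when $q'$ lies in the good range and $|\beta'|\leq(q')^{-2}$), iterate to an earlier partial sum $\theta_{l}$, $l<l^{*}$ (when the Dirichlet condition fails), or partition $\{0,\dots,N-1\}$ into arcs of length $\sim 1/|\beta'|$ on which $e(\beta'x)$ is essentially constant and invoke the small-$q$ argument on each arc (when $q'$ is below the good range).

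The main obstacle is the large-$M$ case: orchestrating the three sub-mechanisms (direct Vaughan, iterated partial-sum Vaughan, interval partition plus PNT in AP) to cover every configuration of the sequence $(i_{l})$. This rests on a combinatorial observation that, given $k<n^{1/2}$, some index $l$ must yield either a workable Dirichlet approximation in the good range or a sufficiently large jump $i_{l+1}-i_{l}$ to trigger the interval-partition approach; if every good-range $i_{l}$ satisfied $i_{l+1}\leq 2i_{l}+O(\sqrt{\log N})$, the sequence could not grow from the lower edge of the good range up to $i_{l^{*}}>n/2$ within $k<n^{1/2}$ steps. Careful tracking of the constants $c_{1}$ and $C$ then combines the intermediate estimates to the claimed $O(e^{-c_{1}\sqrt{\log N}})$.
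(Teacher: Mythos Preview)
Your large-$M$ case has a genuine gap: the combinatorial observation you invoke is false. Under the constraint $i_{l+1}\leq 2i_l+O(\sqrt{\log N})$, a sequence starting at the lower edge $i_l\approx C'\sqrt{n}$ of the good range reaches $n/2$ after only $O(\log n)$ doublings, and $\log n\ll n^{1/2}$. Concretely, take $k\approx \sqrt{n}/2$ and $i_j\approx 2j\sqrt{n}$ for $j=1,\dots,k$, so every gap is $\approx 2\sqrt{n}$. Then for every $l\geq 2$ in the good range one has $i_{l+1}-i_l\approx 2\sqrt{n}<i_l$, so the Dirichlet condition $|\theta-\theta_l|\leq 2^{-2i_l}$ fails and sub-case~(a) never applies; and since no gap jumps from below $C'\sqrt{n}$ to above $n-C'\sqrt{n}$, sub-case~(c) never applies either. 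Your iteration (b) therefore runs through all indices without ever terminating in a usable bound.

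The underlying problem is that you are trying to feed the partial sums $\theta_l$ \emph{directly} into Vaughan--Vinogradov as the approximating rational, and this demands the restrictive condition $|\theta-\theta_l|\leq 2^{-2i_l}$. The paper avoids this by running Vaughan in its contrapositive form: assuming $|\hat\mu(\theta)|\geq\delta$, Vaughan produces \emph{some} rational $a/q$ with $q\ll\delta^{-O(1)}$ and $|\theta-a/q|\ll\delta^{-O(1)}/N$. Separately, the pigeonhole principle (applied to $i_0=0<i_1<\dots<i_k<i_{k+1}=n$) guarantees a gap $i_{j+1}-i_j\geq n/(2k)>\sqrt{n}/2$, which yields only the weak approximation $|\theta-a'/2^{i_j}|\leq 2Q\cdot 2^{-n/(2k)}/2^{i_j}$---far too crude for Vaughan, but enough to force $|a/q-a'/2^{i_j}|<1/(q\cdot 2^{i_j})$ and hence $q\mid 2^{i_j}$. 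Now $q$ itself is a small power of two, and your small-$q$ argument (PNT in progressions with no Siegel zero mod $2^t$, plus interval partition to handle the error $|\theta-a/q|$) finishes the job. The point is that the sparse-dyadic structure is used to constrain the denominator that Vaughan hands you, not to manufacture a Dirichlet approximation of your own.
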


\emph{Proof of Proposition \ref{fourier-walsh-bound}.}  Suppose that Proposition \ref{fourier-walsh-bound} fails. Then there is some $S \subseteq \{1,\dots,n\}$ such that $|\hat{\mu}(S)| \geq k e^{-cn^{1/2}/k}$. By Proposition \ref{lems} there is a sparse dyadic rational $\theta = \frac{r_1}{2^{i_1}} + \dots + \frac{r_k}{2^{i_k}}$, $|r_i| \ll e^{3c n^{1/2}/k}$, such that $|\hat{\mu}(\theta)| \geq e^{-4cn^{1/2}}$. If $c$ is chosen small enough, this is contrary to Proposition \ref{to-prove-prop}.

\section{Exponential sums over M\"obius at sparse dyadic rationals}

In this section we complete our remaining task, which is to establish Proposition \ref{to-prove-prop}. We begin by quoting the following result, which would be well-known to experts in analytic number theory, but for which it is hard to find a really concise reference.

\begin{theorem}\label{quote-thm}
For some absolute constant $c_2 > 0$ the following is true. Suppose that $\chi$ is a Dirichlet character to modulus $q = 2^t$, $q \leq e^{c_2\sqrt{\log N}}$. Then 
\[ \E_{0 \leq x \leq N-1} \mu(x) \chi(x) = O(e^{-c_2\sqrt{\log N}}).\]
\end{theorem}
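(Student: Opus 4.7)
The plan is to apply the standard Perron-formula and contour-shift machinery to the identity
\[ \sum_{n \geq 1} \frac{\mu(n)\chi(n)}{n^s} = \frac{1}{L(s,\chi)}, \qquad \Re(s) > 1. \]
A truncated Perron formula gives
\[ \sum_{n \leq N}\mu(n)\chi(n) = \frac{1}{2\pi i}\int_{c-iT}^{c+iT} \frac{N^s}{s\,L(s,\chi)}\,ds + (\text{error}), \]
with $c = 1 + 1/\log N$ and $T$ chosen of size roughly $e^{c'\sqrt{\log N}}$; the error is acceptable, being controlled by the trivial bound $|\mu\chi| \leq 1$ and standard Perron estimates. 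I would then shift the contour leftward to $\Re(s) = 1 - \eta$ with $\eta = c_0/\sqrt{\log N}$. Subject to $L(s,\chi)$ being non-vanishing inside the traversed rectangle, no pole of the integrand is crossed (for the principal character, the pole of $\zeta$ at $s=1$ makes $1/L$ vanish there, not have a pole), and using standard bounds $|1/L(s,\chi)| \ll \log^{O(1)}(q(|\Im s|+2))$ throughout the zero-free region, the shifted vertical integral and horizontal connecting segments contribute $O(N^{1-\eta}\log^{O(1)}(qT))$, which is $O(N\exp(-c_2\sqrt{\log N}))$ for suitable $c_2$.

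The only substantive input required is a suitable zero-free region of the shape $\Re(s) > 1 - c/\log(q(|\Im s|+2))$, valid uniformly in all characters of modulus $q = 2^t$ with $q \leq e^{c_2\sqrt{\log N}}$ and heights $|\Im s| \leq T$. The classical de la Vall\'ee Poussin result yields exactly this, with the sole exception of a possible single real ``Siegel'' zero of a real Dirichlet character. The main obstacle of the argument is therefore ruling out such a Siegel zero; the resolution uses the special structure of $2$-power moduli.

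A real primitive Dirichlet character has conductor equal to a fundamental discriminant, and the only fundamental discriminants whose absolute value is a power of $2$ are $-4$, $8$ and $-8$. Hence every real character modulo $q = 2^t$ is induced from one of a fixed list of three primitive characters of conductor at most $8$. For this finite list, the absence of zeros of $L(s,\chi)$ in any fixed strip $\Re(s) > 1 - c_3$ may be verified unconditionally and with effective constants (e.g.\ by direct numerical computation, or by classical explicit bounds for $L$-functions of small conductor). Combined with the de la Vall\'ee Poussin bound for non-real characters, this provides an effective Siegel-free zero-free region uniform across all Dirichlet characters of $2$-power modulus, and the contour argument above then runs through without obstruction. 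Dividing the resulting bound on the sum by $N$, and noting $\mu(0) = 0$, yields the claimed estimate for the average.
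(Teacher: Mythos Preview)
Your proposal is correct and follows essentially the same route as the paper: apply Perron's formula and a contour shift using the classical zero-free region, reduce the only possible obstruction to an exceptional zero of a real character, and then observe that the real primitive characters of $2$-power conductor form a finite explicit list (the paper identifies them via the structure of $(\Z/2^e\Z)^*$ rather than via fundamental discriminants, but arrives at the same three characters $\chi_4$, $\chi_8$, $\chi_4\chi_8$) for which exceptional zeros can be excluded directly. The paper even notes, as you do implicitly, that $L(1,\chi)\neq 0$ plus continuity already suffices here.
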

\begin{proof} 
By standard techniques of analytic number theory (Perron's formula and the classical zero-free region for Dirichlet $L$-functions) one may establish the bound
\[ \E_{0 \leq x \leq N-1} \mu(x) \chi(x) = O(e^{-c_2\sqrt{\log N}})\]
for \emph{all} Dirichlet characters $\chi$ to modulus $q \leq e^{c_2\sqrt{\log N}}$, provided only that $L(s,\chi)$ has no \emph{exceptional zero}. This last possibility only occurs (if at all) when $\chi$ is real, and the zero in question lies on the segment $(1/2,1]$. 

For a proof of this bound the reader may consult the very comprehensive book \cite{montgomery-vaughan}, in which this precise statement is Exercise 11.3.7; the key ingredient, other than Perron's formula, is the upper bound for $1/L(s,\chi)$ given by Theorem 11.4 of that book, and specifically equation (11.7).

We now specialise to the case $q = 2^t$. Without loss of generality we may suppose that $q$ is the conductor of $\chi$. To rule out the possibility of exceptional zeros note that there \emph{are} only three nonprincipal primitive real Dirichlet characters with conductor a power of two: the character $\chi_4$ on $(\Z/4\Z)^*$ with $\chi_4(-1) = -1$, and the characters $\chi_8$ and $\chi_4 \chi_8$, where $\chi_8$ is the primitive character on $(\Z/8\Z)^*$ defined by $\chi_8(3) = \chi_8(5) = -1$ and $\chi_8(1) = \chi_8(7) = 1$. This follows from the well-known fact that 
\[ (\Z/2^e \Z)^* \cong \{\pm 1\} \times \{1, 5,5^2,\dots, 5^{2^{e-2} - 1}\}\] for $e \geq 3$, which means that any Dirichlet character of conductor dividing $2^e$ is determined by its values at $-1$ and $5$. 

For each of these three characters it is known that there are no real zeros of the corresponding $L(s,\chi)$ on $(\frac{1}{2},1]$; see for example \cite{ramare-rumely}. (In fact it suffices to know that $L(1,\chi) \neq 0$, which is of course true for \emph{all} Dirichlet characters $\chi$. This implies that $L(s,\chi) \neq 0$ for $s \in [1-c', 1]$ for some $c'$ by continuity, which already means that there are no exceptional zeros.)
\end{proof}

\begin{corollary}\label{dyadic-cor}
Let $c_3 := c_2/2$. Then we have the estimate
\[ \E_{0 \leq x \leq N-1} \mu(x) e(a x/2^t) = O(e^{-c_3\sqrt{\log N}}),\] uniformly for $2^t \leq e^{c_3\sqrt{\log N}}$ and $a \in \Z$. 
\end{corollary}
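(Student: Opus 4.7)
My plan is to convert the additive phase $e(ax/2^t)$ into a combination of Dirichlet characters modulo $q := 2^t$ using Gauss sums, and then invoke Theorem \ref{quote-thm}. By pulling out any common power of $2$ from $a$ and $q$, I may assume $\gcd(a,q) = 1$; this only decreases $q$, so the hypothesis $q \leq e^{c_3\sqrt{\log N}}$ is preserved. (If $q = 1$ the statement is just PNT, i.e.\ the trivial-character case of Theorem \ref{quote-thm}.)

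Since $\mu(x) = 0$ whenever $4 \mid x$, the sum $\sum_{x < N} \mu(x) e(ax/q)$ splits as a sum over odd $x$ plus a sum over $x \equiv 2 \pmod 4$. The substitution $x = 2m$ turns the second piece into $-\sum_{m \text{ odd},\, m < N/2} \mu(m)\, e(am/(q/2))$, a sum of exactly the same shape with $q, N$ replaced by $q/2, N/2$. It therefore suffices to bound the contribution of odd $x$, for which $(x,q) = 1$ automatically since $q$ is a power of $2$.

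For the odd-$x$ sum I would use the standard identity
\[ e(ax/q) \;=\; \frac{1}{\varphi(q)} \sum_{\chi \bmod q} \chi(a)\,\chi(x)\,\tau(\bar\chi), \qquad (ax,q) = 1, \]
where $\tau(\bar\chi) = \sum_m \bar\chi(m) e(m/q)$ is the Gauss sum, to rewrite the sum as
\[ \frac{1}{\varphi(q)} \sum_{\chi \bmod q} \chi(a)\,\tau(\bar\chi) \sum_{x < N} \mu(x)\,\chi(x); \]
extending the inner sum to all $x < N$ is free because $\chi(x) = 0$ for $(x,q) > 1$. Theorem \ref{quote-thm} now gives $\bigl|\sum_{x < N} \mu(x)\chi(x)\bigr| \ll N\, e^{-c_2\sqrt{\log N}}$ for every character $\chi$ mod $q$, and the trivial Gauss-sum bound $|\tau(\bar\chi)| \leq q$ summed over the $\varphi(q)$ characters contributes one further factor of $q$. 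After dividing by $N$, the average has size $O\bigl(q\, e^{-c_2\sqrt{\log N}}\bigr)$; with $q \leq e^{c_3\sqrt{\log N}}$ and the choice $c_3 = c_2/2$ this is the claimed $O\bigl(e^{-c_3\sqrt{\log N}}\bigr)$. All the genuine analytic content sits in Theorem \ref{quote-thm}; the only mildly delicate point is the bookkeeping around the non-coprime residues $x \equiv 2 \pmod 4$ on which $\mu$ fails to vanish.
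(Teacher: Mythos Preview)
Your proof is correct and is essentially the same as the paper's: both split the sum according to $x$ odd versus $x\equiv 2\pmod 4$, expand $e(ax/2^t)$ for odd $x$ via orthogonality of characters on $(\Z/2^t\Z)^*$, and then invoke Theorem~\ref{quote-thm}, accepting the trivial loss of a factor $q$. The only cosmetic difference is that you first reduce to $(a,q)=1$ so that the inner sum becomes the Gauss sum $\chi(a)\tau(\bar\chi)$, whereas the paper leaves it as the unsimplified sum $\sum_r \overline{\chi(r)}\,e(ar/2^t)$ and bounds it trivially by $\varphi(q)$.
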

\begin{proof} We use Fourier analysis on $(\Z/2^t\Z)^*$. If $x \in (\Z/2^t\Z)^*$  then $\sum_{\chi} \chi(x)$ equals $2^{t-1}$ if $x = 1$ and vanishes otherwise, where the sum is over all characters $\chi : (\Z/2^t\Z)^*\rightarrow \C^*$. It follows that 
\[ e(\frac{ax}{2^t}) = \frac{1}{2^{t-1}} \sum_{r \in (\Z/2^t \Z)^*} \sum_{\chi} \overline{\chi(r)} e(\frac{ar}{2^t}) \chi(x)\] for all odd $x$. 

It follows from the preceding equation, the triangle inequality and Theorem \ref{quote-thm} that 
\[ \sum_{\substack{0 \leq x \leq N-1\\ 2 \nmid x}} \mu(x) e(\frac{ax}{2^t}) \leq 2^{t-1} \sup_{\chi}|\sum_{0 \leq x \leq N-1} \mu(x) \chi(x)| = O( N e^{-c_3\sqrt{\log N}}).\]
We could have estimated the Gauss sum here less trivially, but there is no need.
By the same argument (using the fact that $\mu(2x) = -\mu(x)$ when $x$ is odd) we have
\[ \sum_{\substack{0 \leq x \leq N-1\\ 2 | x\\ 4 \nmid x}} \mu(x) e(\frac{ax}{2^t}) = -\sum_{0 \leq x < \frac{1}{2}N: 2 \nmid x} \mu(x) e(\frac{ax}{2^{t-1}}) = O(N e^{-c_3\sqrt{\log N}}).\]
This concludes the proof of the corollary.
\end{proof}

Finally we give a further standard deduction, allowing us to bound $\hat{\mu}(\theta)$ when $\theta$ is \emph{near} a dyadic rational.

\begin{corollary}\label{near-dyadic}
Let $c_4 := c_3/3$. Suppose that $|\theta - a/q| \leq e^{c_4 \sqrt{\log N}}/N$, where $q \leq e^{c_4\sqrt{\log N}}$ is a power of two. Then $\hat{\mu}(\theta) = O(e^{-c_4\sqrt{\log N}})$.
\end{corollary}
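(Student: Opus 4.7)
The plan is to write $\theta = a/q + \beta$ with $|\beta| \le e^{c_4\sqrt{\log N}}/N$, so that $e(\theta x) = e(ax/q) \cdot e(\beta x)$, and then use summation by parts to strip off the slowly-varying factor $e(\beta x)$, reducing everything to partial sums of $\mu(x) e(ax/q)$ which are already controlled by Corollary \ref{dyadic-cor}.

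Concretely, set $S(M) := \sum_{0 \leq x \leq M} \mu(x) e(ax/q)$. Abel summation gives
\[ \sum_{0 \leq x \leq N-1} \mu(x) e(\theta x) = S(N-1) e(\beta(N-1)) + \sum_{M=0}^{N-2} S(M)\bigl(e(\beta M) - e(\beta(M+1))\bigr),\]
and since $|e(\beta M) - e(\beta(M+1))| \leq 2\pi|\beta|$, we obtain
\[ N|\hat{\mu}(\theta)| \leq |S(N-1)| + 2\pi|\beta| \sum_{M=0}^{N-2} |S(M)|.\]
Since $q = 2^t$ with $t \leq c_4 \sqrt{\log N} \leq c_3 \sqrt{\log N}$, Corollary \ref{dyadic-cor} applies and yields $|S(N-1)| = O(N e^{-c_3 \sqrt{\log N}})$, handling the boundary term.

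For the summed term I would split the range: for $M \leq N^{1/2}$ use the trivial bound $|S(M)| \leq M$, contributing at most $N$; for $M > N^{1/2}$, Corollary \ref{dyadic-cor} applied with $M$ in place of $N$ gives $|S(M)| \ll M e^{-c_3 \sqrt{\log M}} \ll M e^{-(c_3/\sqrt{2})\sqrt{\log N}}$, and summing gives $O(N^2 e^{-(c_3/\sqrt{2})\sqrt{\log N}})$. Multiplying by $|\beta|/N \leq e^{c_4 \sqrt{\log N}}/N^2$ yields a final contribution of $O(e^{-(c_3/\sqrt{2} - c_4)\sqrt{\log N}})$; with $c_4 = c_3/3$, the exponent $c_3/\sqrt{2} - c_4$ exceeds $c_4$, giving the desired bound after dividing through by $N$.

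There is no real obstacle; the only thing to check carefully is that Corollary \ref{dyadic-cor} remains usable for partial ranges $M \le N$, which is fine because $\sqrt{\log M}$ is comparable to $\sqrt{\log N}$ whenever $M \ge N^{1/2}$, and the small range $M < N^{1/2}$ is absorbed trivially. The budget $|\beta| \leq e^{c_4\sqrt{\log N}}/N$ is exactly what is needed so that the summation-by-parts loss of a factor $|\beta| N$ does not overwhelm the savings from Corollary \ref{dyadic-cor}; the choice $c_4 = c_3/3$ leaves enough slack for this.
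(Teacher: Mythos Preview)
Your proof is correct and follows the same underlying idea as the paper: exploit the slow variation of $e(\beta x)$ to reduce to partial sums of $\mu(x)e(ax/q)$, which are controlled by Corollary \ref{dyadic-cor}. The only difference is mechanical --- you use Abel summation, whereas the paper subdivides $\{0,\dots,N-1\}$ into $O(N/L)$ intervals of length $L$, approximates $e(\theta x)$ by $e(ax/q)$ on each (incurring an error $O(L^2 e^{c_4\sqrt{\log N}}/N)$), bounds each interval sum by $O(Ne^{-c_3\sqrt{\log N}})$, and then optimises by taking $L = Ne^{-2c_4\sqrt{\log N}}$; the two devices are interchangeable here.
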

\begin{proof}
Let $L$, $1 < L < N$, be a quantity to be specified. If $P \subseteq \{0,\dots,N-1\}$ is any interval of integers of length $L$ and if $x_0$ lies in $P$ then we have
\begin{align*}| \sum_{x \in P} \mu(x) e(\theta x)| & = |\sum_{x \in P} \mu(x) e(ax/q) e((x - x_0)(\theta - a/q))| \\ & = |\sum_{x \in P} \mu(x) e(ax/q)| + O(\frac{L^2}{N}e^{c_4\sqrt{\log N}})  \\ & = O(N e^{-c_3\sqrt{\log N}}) + O(\frac{L^2}{N}e^{c_4\sqrt{\log N}}) ,\end{align*} the latter estimate following immediately from Corollary \ref{dyadic-cor}. Dividing $\{0,\dots,N-1\}$ into $O(N/L)$ such progressions we obtain
\[ \hat{\mu}(\theta) = O(\frac{N}{L}e^{-c_3\sqrt{\log N}}) + O(\frac{L}{N} e^{c_4\sqrt{\log N}}).\]
Choosing $L := Ne^{-2c_4\sqrt{\log N}}$ gives the result.
\end{proof}

Next, we recall a standard estimate on exponential sums over the M\"obius function. This may be established using the method of Type I/II sums or bilinear forms, perhaps most easily by utilising Vaughan's identity. The estimate provides an excellent bound for $\hat{\mu}(\theta)$ if $\theta$ is not well-approximated by a rational with small denominator.

\begin{proposition}\label{vaughan}
Suppose that $|\hat{\mu}(\theta)| \geq \delta$. Then there is some $q \ll ( \frac{\log N}{\delta})^{16}$ such that $|\theta - a/q| \ll (\frac{\log N}{\delta})^{16}N^{-1}$.
\end{proposition}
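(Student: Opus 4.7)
My approach is to apply Vaughan's identity to $\sum_{x=0}^{N-1}\mu(x)e(\theta x) = N\hat{\mu}(\theta)$ and then dispatch the resulting Type~I and Type~II sums in the classical way, so that a large value of $|\hat{\mu}(\theta)|$ forces $\theta$ to admit a good rational approximation.

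First I would apply Vaughan's identity with parameters $U = V = N^{1/3}$. This writes the sum, up to a trivially bounded term of size $O(N^{2/3})$, as a linear combination of Type~I sums of the form
\[ \sum_{d \le D}\alpha_d \sum_{m \le N/d} e(\theta m d), \qquad D \le N^{2/3}, \ |\alpha_d| \le \tau(d), \]
together with a Type~II (bilinear) sum $\sum_{U < m \le N/V}\sum_{V < n \le N/m}\alpha_m\beta_n e(\theta m n)$ with $|\alpha_m|, |\beta_n| \le \tau(\cdot)$. The inner geometric sum in the Type~I pieces is $\ll \min(N/d, \|\theta d\|^{-1})$, reducing them to estimating $\sum_{d \le D}\tau(d)\min(N/d, \|\theta d\|^{-1})$. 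I would handle the Type~II sum by Cauchy--Schwarz in $m$, then open the square and pass to the difference variable $k = n - n'$, yielding a quantity of the shape $\sum_{k \le K}\min(N^2/(Uk), \|\theta k\|^{-1})$ up to logarithmic losses.

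Both residual sums are controlled by the standard lemma that if $|\theta - a/q| \le 1/q^2$ with $(a,q) = 1$, then $\sum_{m \le M}\min(N/m, \|\theta m\|^{-1}) \ll (M/q + 1)(N + q\log q)\log(2qN)$ (see Vaughan's monograph or Chapter~13 of Iwaniec--Kowalski). Dirichlet's theorem furnishes such $a, q$ with $q \le Q$ and $|\theta - a/q| \le 1/(qQ)$ for any chosen $Q \le N$. Combining the pieces yields a bound of the form
\[ |\hat{\mu}(\theta)| \ll \bigl(q^{-1/2} + N^{-1/6} + (q/N)^{1/2}\bigr)(\log N)^{A} \]
for some absolute constant $A$.

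Finally I choose $Q := N\delta^2/(\log N)^{2A+B}$ for a suitable $B$. If $|\hat{\mu}(\theta)| \geq \delta$, one of the three terms must exceed $\delta/3$. Since $q \le Q$ the third term is bounded by $\delta/3$ once $B$ is large enough; the middle term is harmless, since the proposition's conclusion is vacuous for $\delta \ll N^{-1/6}(\log N)^A$ (as then $(\log N/\delta)^{16}$ already exceeds anything we need). Hence the first term dominates, forcing $q \ll (\log N/\delta)^{2A}$, and $|\theta - a/q| \le 1/(qQ) \le (\log N/\delta)^{2A+B}/N$ follows automatically from the choice of $Q$. A careful accounting shows $2A + B \le 16$ is achievable; the main obstacle is precisely this bookkeeping of logarithmic and divisor-sum losses through the Cauchy--Schwarz step, as the structural argument itself is entirely classical.
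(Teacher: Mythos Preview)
Your approach is essentially the same as the paper's: both rest on the classical Vaughan-identity bound for $\sum_{x\le N}\mu(x)e(\theta x)$ in terms of a rational approximation $a/q$ to $\theta$, followed by the Dirichlet/pigeonhole endgame you describe. The only difference is packaging. The paper simply \emph{quotes} the finished inequality (Theorem~13.9 of Iwaniec--Kowalski), obtaining
\[
\delta(\log N)^{-4} \ll \max\bigl(q^{1/4}N^{-1/4},\,q^{-1/4},\,N^{-1/10}\bigr)
\]
whenever $|\theta-a/q|\le 1/q^2$, and then runs exactly your three-case elimination with $Q=cN(\log N/\delta)^{-16}$. You instead sketch the proof of that inequality from scratch via Type~I/II sums. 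Either way the exponent $16$ is an artefact of the particular log-losses one is willing to track; your $q^{-1/2}$ variant would actually give a smaller exponent than the paper's $q^{-1/4}$ quotation, but since the proposition only asserts an $O((\log N/\delta)^{16})$ bound, both suffice.
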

\begin{proof} (Sketch.) Let $Q = cN(\frac{\log N}{\delta})^{-16}$ for some small $c > 0$ to be specified shortly. By Dirichlet's theorem on diophantine approximation there is some $q$, $1 \leq q \leq Q$, such that $|\theta - a/q| \leq 1/qQ$. In particular $|\theta - a/q| \leq 1/q^2$, and so we may apply \cite[Theorem 13.9]{iwaniec-kowalski} to conclude that $\delta(\log N)^{-4} \ll \max(q^{1/4} N^{-1/4}, q^{-1/4}, N^{-1/10})$.  If $\delta < N^{-1/12}$ (say) then the result is trivial, so we have either $\delta (\log N)^{-4} \ll q^{1/4} N^{-1/4} \leq Q^{1/4} N^{-1/4}$ or $\delta (\log N)^{-4} \ll q^{-1/4}$. If $c$ is small enough then the first of these is impossible by the choice of $Q$, and so the second must hold. This implies that $q \ll (\frac{\log N}{\delta})^{16}$, thereby concluding the proof.
\end{proof}

Corollary \ref{near-dyadic} and Proposition \ref{vaughan} between them provide good estimates on $\hat{\mu}(\theta)$ in two cases: when $\theta$ is close to a rational with denominator a power of two, and when $\theta$ is not very close to any rational with small denominator.

Of course, there are values of $\theta$ (such as $\theta = \frac{1}{3}$) not covered by either proposition. However we have the following key observation of Harman and K\'atai \cite{harman-katai}, allowing one to conclude that none of these exceptional values of $\theta$ are sparse dyadic rationals. Here is a precise formulation of their idea.

\begin{lemma}\label{dio-lemma}
Suppose that $\theta = \frac{r_1}{2^{i_1}} + \dots + \frac{r_k}{2^{i_k}}$, where $i_1 < \dots < i_k \leq n$ and $|r_i| \leq Q$ for all $i$. Suppose furthermore that there is some $q \leq Q$ and an $a$ coprime to $q$ such that $|\theta - a/q| \leq Q/N$, and that we have $2^{n/2k} > 4Q^2$. 
Then $q$ is a power of two.
\end{lemma}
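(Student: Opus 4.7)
The plan is to exploit the sparse dyadic structure of $\theta$ to locate a large gap among the exponents $i_j$, split $\theta$ accordingly into a dyadic-rational main term and a negligible tail, and then argue by Farey-type separation that the approximation $|\theta - a/q| \le Q/N$ forces $a/q$ to coincide with the main term, so that $\gcd(a,q)=1$ makes $q$ a power of two.

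First I would introduce the $k+1$ gaps $g_0 := i_1$, $g_j := i_{j+1} - i_j$ for $1 \le j \le k-1$, and $g_k := n - i_k$; these sum to $n$, so by pigeonhole some $j_0 \in \{0, 1, \dots, k\}$ satisfies $g_{j_0} \ge n/(k+1) \ge n/(2k)$, and the hypothesis $2^{n/(2k)} > 4Q^2$ then gives $2^{g_{j_0}} > 4Q^2$. I would then decompose $\theta = \theta' + \theta''$ with $\theta' := \sum_{j \le j_0} r_j/2^{i_j}$ and $\theta'' := \sum_{j > j_0} r_j/2^{i_j}$, adopting the convention $i_0 := 0$ so that $\theta' = p/2^{i_{j_0}}$ for some integer $p$. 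A geometric-series bound gives $|\theta''| \le 2Q/2^{i_{j_0+1}}$ (with $\theta'' = 0$ when $j_0 = k$), whence $|\theta' - a/q| \le Q/N + 2Q/2^{i_{j_0+1}}$.

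Suppose for contradiction that $p/2^{i_{j_0}} \ne a/q$. Farey separation then gives $|p/2^{i_{j_0}} - a/q| \ge 1/(q \cdot 2^{i_{j_0}})$, which combined with the above and $q \le Q$ yields
\[ 1 \le \frac{Q^2 \cdot 2^{i_{j_0}}}{N} + \frac{2Q^2}{2^{g_{j_0}}}. \]
Since $i_{j_0} \le n - g_{j_0}$ (from $g_0 + \dots + g_k = n$), the first term is at most $Q^2/2^{g_{j_0}} < 1/4$, and the choice of $j_0$ makes the second term $< 1/2$. The total is $<1$, a contradiction. Hence $p/2^{i_{j_0}} = a/q$, which together with $\gcd(a,q)=1$ forces $q \mid 2^{i_{j_0}}$, so $q$ is a power of two.

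The main subtlety is uniform bookkeeping at the endpoints of the index range. The boundary gaps $g_0$ and $g_k$ must be included in the pigeonhole, since otherwise no large gap need exist among just the internal $i_{j+1}-i_j$. The decomposition then degenerates sensibly at the boundary: when $j_0 = k$ the second term of the displayed inequality simply vanishes and the first is controlled by $g_k$ being large, while when $j_0 = 0$ one has $p = 0$ and $2^{i_0} = 1$, so the Farey step compares $0$ with $a/q$ and yields $a = 0$, hence $q = 1 = 2^0$. I do not anticipate any other genuine obstacle; the rest is a routine estimate.
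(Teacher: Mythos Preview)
Your proof is correct and follows essentially the same approach as the paper: pigeonhole among the $k+1$ gaps (including the boundary gaps at $0$ and $n$) to find a long one, split $\theta$ into a dyadic main term with denominator $2^{i_{j_0}}$ and a tail bounded by $2Q/2^{i_{j_0+1}}$, and use the $1/(qq')$ separation of distinct rationals to force $a/q$ to equal the main term. Your treatment of the edge cases $j_0=0$ and $j_0=k$ is slightly more explicit than the paper's, and your final step (deducing $q\mid 2^{i_{j_0}}$ from $\gcd(a,q)=1$) is cleaner than the paper's phrasing, but the arguments are otherwise identical.
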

\begin{proof}
Set $i_0 := 0$ and $i_{k+1} := n$. By the pigeonhole principle there is some index $j \in \{0,1,\dots,k\}$ such that $|i_j - i_{j+1}| \geq n/2k$. Now we note that $\theta$ is very close to a rational with denominator $q' = 2^{i_{j}}$; specifically, writing $a' := r_{1}2^{i_j - i_1} + \dots + r_{j-1}2^{i_j - i_{j-1}} + r_{j}$, we have
\[ |\theta - \frac{a'}{q'}| \leq Q(2^{- i_{j+1}} + 2^{- i_{j+2}} + \dots ) \leq 2^{-n/2k} \frac{2Q}{q'}.\] 
Note also that $q' \leq 2^{-n/2k} N$, and so (by the hypothesis) $Q/N < 1/4Qq'$.  It follows that
\[ |\frac{a}{q} - \frac{a'}{q'}| \leq |\theta - \frac{a'}{q'}| + |\theta - \frac{a}{q}| \leq 2^{-n/2k} \frac{2Q}{q'} + \frac{Q}{N} < \frac{1}{2Qq'} + \frac{Q}{N}  < \frac{1}{Qq'} \leq \frac{1}{qq'},\]
which of course implies that $q = q'$ and $a = a'$.
\end{proof}

We are now ready to prove Proposition \ref{to-prove-prop} which, as we remarked earlier, completes the proof of our main theorem.  Let us recall the statement.

\begin{to-prove-prop-repeat}
There is some $c_1 > 0$ with the following property. Let $N = 2^n$, suppose that $k < n^{1/2}$, and suppose that $\theta = \frac{r_1}{2^{i_1}} + \dots + \frac{r_k}{2^{i_k}}$ with  $|r_i| \leq e^{c_1\sqrt{\log N}}$ for all $i$. Then $\hat{\mu}(\theta) = O( e^{-c_1\sqrt{\log N}})$.
\end{to-prove-prop-repeat}
\begin{proof}
Set $\delta := e^{-c_1\sqrt{\log N}}$ and take $c_1 = c_4/32$. Suppose that there is some $\theta$ of the above sparse dyadic form with $|\hat{\mu}(\theta)| \geq \delta$. By Proposition \ref{vaughan} (and the fact that $\delta$ is much smaller than $1/\log N$) there is some $q \ll \delta^{-32} = e^{c_4\sqrt{\log N}}$ such that $|\theta - a/q| \ll e^{c_4\sqrt{\log N}}N^{-1}$. Consider Lemma \ref{dio-lemma} with $Q = e^{c_4\sqrt{\log N}}$. Clearly $|r_i| \leq Q$ for all $i$, and the hypothesis $2^{n/2k} > 4Q^2$ is satisfied. It follows that $q$ is a power of two.

But then Proposition \ref{near-dyadic} may be applied to conclude that $\hat{\mu}(\theta) = O(e^{-c_3\sqrt{\log N}})$, which is what we wanted to prove.
\end{proof}

\end{document}